\newcommand{\C} {\mathbb C}
\newtheorem{theorem}{Theorem}
\newtheorem{corollary}[theorem]{Corollary}
\newtheorem{proposition}[theorem]{Proposition}
\newtheorem*{definition}{Definition}
\title[On hyperbolic metric in holomorphic dynamics]{On hyperbolic metric and 
asymptotically finite invariant differentials 
in holomorphic dynamics.}
\author{Carlos Cabrera}
\address{Unidad Cuernavaca del Instituto de Matem\'aticas, 
UNAM.}
\email{carloscabrerao@im.unam.mx}
 \author{Peter Makienko }
\email{makienko@matcuer.unam.mx}
\begin{document}

 \begin{abstract}
Given a rational map $R$, we consider  the complement of the postcritical set 
$S_R$. In this paper we discuss the existence of invariant Beltrami 
differentials supported on a $R$ invariant subset $A$ of $S_R$.  Under some 
geometrical restrictions, either on the hyperbolic geometry of $A$ or on the 
asymptotic behavior of infinitesimal geodesics of the Teichm\"uller space of 
$S_R$, we show  the absence of invariant Beltrami differentials supported on 
$A$. 
In particular,  we show that if $A$ has finite 
hyperbolic area, then $A$ can not support invariant Beltrami differentials 
except in the case where $R$ is a Latt\`es map.  
\end{abstract}
\maketitle
\footnotetext{This work was partially supported by PAPIIT IN102515.}

\section{Introduction}

This article is a complementary part to the work done in \cite{CMFixed} with 
its own independent interest. We discuss geometric conditions under which 
there are no invariant Beltrami differentials supported on the dissipative set 
of a rational map $R$. 

In this paper we will always assume that the conservative set of the action of 
$R$ belongs to the Julia set.

Now, let us introduce the geometric objects to be treated in this 
paper. 

We denote by $P(R)$ the closure of the postcritical set of $R$ and consider 
the surface $S_R:=\bar{\C}\setminus P(R)$.  The surface $S_R$ is not always 
connected, however, on each connected component of $S_R$ we fix a 
Poincar\'e hyperbolic metric  and denote by $\lambda$ the family of all these 
metrics.

Let $Q(S_R)$ be the subspace of $ L_1(S_R)$ of holomorphic 
integrable functions on $S_R.$

A rational map $R$ defines a complex Push-Forward map on $L_1(\C)$, with 
respect to the Lebesgue measure $m$, which is a contracting endomorphism and 
is called the complex Ruelle-Perron-Frobenius, for shortness  
Ruelle operator. The Ruelle operator has the following formula:

\[R^*(\phi)(z)=\sum_{y \in R^{-1}(z)} \frac{\phi(y)}{R'(y)^2}
R(\zeta)\]
\[=\sum_i \phi(\zeta_i)(z)\zeta'_i(z)\] where $\zeta_i$ is any local complete 
system of branches of $R^{-1}.$ The space $Q(S_R)$ is invariant under the action 
of the Ruelle operator. 
The Beltrami operator $Bel:L_\infty(\C)\rightarrow L_\infty(\C)$ given by 
\[Bel(\mu)=\mu(R)\frac{\overline{R'}}{R'}\] is dual to the Ruelle operator 
acting on $L_1(\C)$. 

The fixed point space $Fix(B)$ of the Beltrami operator is called the 
\textit{space of invariant Beltrami differentials}. An element $\alpha \in 
L_\infty(\C)$ is called non trivial if and only if the functional given by 
\[v_\alpha(\phi) = \int \phi \alpha\]  is non zero on $Q(S_R).$ The norm of 
$v_\alpha$ in $Q^*(S_R)$, for a non trivial element $\alpha$,  is called the 
\textit{Teichm\"uller norm} of $\alpha$ and it is denoted by $\| \alpha 
\|_{T}.$ 

A non trivial element $\alpha$ is called \textit{extremal} if and only if the 
$\|\alpha\|_\infty=\|\alpha\|_T.$ 

A sequence of unit vectors $\{\phi_i\}$ is called a 
\textit{Hamilton-Krushkal} sequence, for short HK-sequence, for an extremal  
element $\alpha$ if and only if \[ \lim_{i\rightarrow 
\infty}|v_\alpha(\phi_i)|=\|\alpha\|_\infty.\]

A HK sequence $\{\phi_i\}$ is called \textit{degenerated} if converge to $0$ 
uniformly on compact sets.
 
Let $T:\mathcal{B}\rightarrow \mathcal{B}$ be a linear contraction of a Banach 
space $\mathcal{B}$. An element 
$b\in \mathcal{B}$ is called \textit{mean ergodic} with respect to $T$ if and 
only if the sequence of Ces\`aro averages with respect to $T$, given by
$C_n(b)=\frac{1}{n}\sum_{i=0}^{n-1} T^i(b)$, forms a weakly precompact family. 
Indeed (see Krengel \cite{Krengel}), when $\mathcal{B}$ is weakly complete then, for 
a mean ergodic element $b$,  the sequence $C_n(b)$ converges in norm to its limit, 
this limit always is  a fixed element of $T$. If every element $b\in \mathcal{B}$ 
is mean ergodic with respect to $T$ then the operator $T$ is called 
mean-ergodic.

By the Bers Representation Theorem, the space $Q^*(S_R)$ is linearly 
quasi-isome\-trically isomorphic to the \textit{Bergman} space $B(S_R)$ which 
is 
the space of holomorphic functions $\phi$ on $S_R$ with the 
norm $\|\lambda^{-2}\phi\|_{L_\infty(S_R)}.$

In the case where $S_R$ has finitely many components, a classical theorem, 
see for example \cite{Matsuzaki} and references within,  states that 
$Q(S_R)\subset B(S_R)$ if and only if the infimum of the length of simple 
closed geodesics is bounded away from $0$.

\section{Main Theorem}
Let $X$ be an $R$ invariant measurable set, then the set $W:=\bigcup 
R^{-n}(X)$ is completely invariant. In the following theorem we will only 
consider Ces\`aro averages with respect to the Ruelle operator $R^*$ in 
$L_1(W).$

\begin{theorem}\label{MainTechnicalThm}
Let $X$ be an $R$ invariant measurable subset such that the restriction map 
$r(\phi)= \phi|_X$ from $Q(S_R)$  to $L_1(X) $ is weakly precompact.  Then 
every $\phi \in Q(S_R)$ is mean ergodic with respect to $R^*$ in  $L_1(W)$. 
\end{theorem}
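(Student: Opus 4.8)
The plan is to verify the definition of mean ergodicity directly: for a fixed $\phi\in Q(S_R)$ I want to show that the Ces\`aro averages $C_N(\phi)=\frac{1}{N}\sum_{i=0}^{N-1}(R^*)^i(\phi)$ form a weakly precompact sequence in $L_1(W)$. Since $W$ is completely invariant we have $R^{-1}(W)=W$, so for $z\in W$ every preimage of $z$ already lies in $W$; this means restriction to $W$ commutes with $R^*$, and I may therefore replace $\phi$ by $\phi|_W$ and argue entirely inside $L_1(W)$. The organizing tool will be the filtration of $W$ by entry time into $X$: put $A_0=X$ and $A_k=R^{-k}(X)\setminus R^{-(k-1)}(X)$ for $k\ge 1$. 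Because $X$ is forward invariant, i.e. $R(X)\subseteq X$, the sets $A_k$ are disjoint, they partition $W$, and a point of $A_k$ first meets $X$ after exactly $k$ iterates; correspondingly the push-forward carries $A_k$-supported mass into $A_{k-1}$.

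First I would split $C_N(\phi)=C_N(\phi)|_X+C_N(\phi)|_{W\setminus X}$ and handle the two summands separately. On $X$ the hypothesis does the work: since $R^*$ is an $L_1$-contraction and $Q(S_R)$ is $R^*$-invariant, the orbit $\{(R^*)^i\phi\}_{i\ge 0}$ is bounded in $Q(S_R)$, so by assumption the restriction map $r$ sends it to a weakly precompact set $K=\{(R^*)^i\phi|_X\}\subset L_1(X)$ with weakly compact weak closure $\widehat K$. Each $C_N(\phi)|_X$ is a convex combination of elements of $K$, hence lies in $\overline{\mathrm{co}}(\widehat K)$, which is weakly compact by the classical Krein--Smulian theorem. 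Thus $\{C_N(\phi)|_X\}$ is weakly precompact in $L_1(X)$.

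The crux is the summand on $W\setminus X$, for which I expect to prove the stronger statement that $\|C_N(\phi)|_{W\setminus X}\|_1\to 0$. Writing $\phi_k=\phi|_{A_k}$, so that $\phi|_W=\sum_k\phi_k$ converges in $L_1$, the support property of the push-forward gives $\mathrm{supp}\,(R^*)^i(\phi_k)\subseteq R^i(A_k)\subseteq A_{k-i}$ when $i\le k$ and $\subseteq X$ when $i\ge k$. Hence only the terms with $k\ge i+1$ feed $(R^*)^i(\phi)|_{W\setminus X}$, and those terms have pairwise disjoint supports; using the contraction property of $R^*$ term by term I obtain $\|(R^*)^i(\phi)|_{W\setminus X}\|_1\le\sum_{k>i}\|\phi_k\|_1=:t_i$. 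Since $\sum_k\|\phi_k\|_1=\|\phi|_W\|_1<\infty$, the tails satisfy $t_i\to 0$, whence $\|C_N(\phi)|_{W\setminus X}\|_1\le\frac{1}{N}\sum_{i=0}^{N-1}t_i\to 0$. In effect, Ces\`aro averaging drains all the transient mass off $W\setminus X$ onto the invariant set $X$.

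To finish I would combine the two pieces: a norm-null sequence is trivially weakly precompact, and the sum of two weakly precompact sequences living on the disjoint supports $X$ and $W\setminus X$ is weakly precompact in $L_1(W)=L_1(X)\oplus L_1(W\setminus X)$. This shows $\{C_N(\phi)\}$ is weakly precompact and hence that $\phi$ is mean ergodic with respect to $R^*$ in $L_1(W)$. The main obstacle I anticipate is the bookkeeping in the third paragraph, namely justifying the support inclusions $R^i(A_k)\subseteq A_{k-i}$ and the term-by-term norm estimate for the $L_1$-convergent expansion $\phi|_W=\sum_k\phi_k$; the conceptual point worth isolating is that the weak-precompactness hypothesis is used only on $X$, while the decay on $W\setminus X$ comes for free from the contraction property and the integrability of $\phi$.
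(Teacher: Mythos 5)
Your argument is correct, and it proves the stated weak precompactness by a genuinely different route. The paper also begins by observing that $\{C_n(\phi)|_X\}$ is weakly precompact (directly, since the $C_n(\phi)$ form a bounded subset of $Q(S_R)$ --- your detour through Krein--\v{S}mulian is valid but unnecessary), but then it does much more: it shows every weak accumulation point of $C_n(\phi)|_X$ is a fixed point of $R^*$, invokes the fact that a nonzero integrable fixed point produces a finite absolutely continuous invariant measure supported in the conservative set, and uses Lyubich's ergodicity theorem and McMullen's theorem to conclude that such a fixed point forces $R$ to be a flexible Latt\`es map; hence in the non-Latt\`es case $C_n(\phi)\to 0$ in $L_1$-norm on $X$, and this is propagated to $W$ through the increasing exhaustion $W_n=R^{-n}(X)$ and a $3\varepsilon$ approximation. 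Your first-entry-time partition $A_k=R^{-k}(X)\setminus R^{-(k-1)}(X)$, the support inclusion $R(A_k)\subseteq A_{k-1}$, and the tail estimate $\|(R^*)^i(\phi|_W)\|_{L_1(W\setminus X)}\le\sum_{k>i}\|\phi_k\|_1$ replace the paper's exhaustion argument with a cleaner, purely measure-theoretic one, and you avoid all dynamical input. What you lose is exactly what the paper's extra work buys: Theorem \ref{MainTheorem} uses not mean ergodicity per se but the vanishing $C_k(\phi)\to 0$ in $L_1(W)$ when $R$ is not flexible Latt\`es. Your proof does yield this after the fact --- by Krengel's mean ergodic theorem the averages converge in norm to an $R^*$-fixed point, and one then still needs the Lyubich/McMullen analysis to show that fixed point is zero --- so nothing is broken, but be aware that the dichotomy you omitted is not dispensable for the application, only relocatable.
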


\begin{proof}
If $X$ is $R$ invariant then the Ruelle operator $R^*$ defines an endomorphism 
of $L_1(X)$. Given $\phi \in Q(S_R)$, the family of Ces\`aro averages 
$C_n(\phi)$ restricted on $X$ forms a weakly precompact subset of $L_1(X).$ 
We claim that $C_n(\phi)$ converges in norm  on $L_1(X).$  Indeed, first 
we show that every weak accumulation point  of $C_n(\phi)$ is a fixed point for 
the Ruelle operator. Let $f$ be  the weak limit of $C_{n_i}(\phi)$ for some 
subsequence $\{n_i\},$ then $R^*(f)$ is the weak limit of $R^*(C_{n_i}(\phi))$.
By the Fatou Lemma 
$$\int_X |f-R^*(f)| \leq \liminf \int_X |C_{n_i}(\phi)-R^*(C_{n_i}(\phi))|$$
$$\leq \liminf \| C_{n_i}(\phi)-R^*(C_{n_i}(\phi))\|_{L_1(S_R)}$$ 
$$\leq \limsup \|C_{n_i}(I-R^*)(\phi)\|_{L_1(S_R)}.$$

But 
$$ \|C_{n_i}(I-R^*)(\phi)\|_{L_1(S_R)}\leq \frac{2}{n_i}\|\phi\|_{L_1(S_R)}.$$
Then $f$ is a non zero fixed point of Ruelle operator. As in \cite{MakRuelle} 
we have that $|f|$ defines a finite absolutely continuous invariant measure. 
Hence, the support of $f$ is a non trivial subset of the conservative set of 
$R.$ By Lyubich's Ergodicity theorem (see \cite{Mc1} and \cite{LyuTypical}) and 
the fact 
that $X$ does not intersect the postcritical set we have $X=W=S_R$. But, 
McMullen's Theorem (Theorem 3.9 of \cite{Mc1}) implies that in this case $R$ is 
a, so called, \textit{flexible Latt\`es} map. Furthermore, 
the space $Q(S_R)$ is finitely dimensional and hence $R^*$ is a compact 
endomorphism of $Q(S_R)$, it follows that $R^*$ is mean ergodic on $Q(S_R)$. 

Therefore, if $R$ is not a flexible Latt\`es map then any weak limit of 
$C_n(\phi)$ is $0$. Since the weak closure of convex bounded sets is equal to 
the closure in norm of convex bounded sets, we conclude our claim.

Now let $W_n=R^{-n}(X)$, one can inductively prove that  
$\phi|_{W_n}$ is mean ergodic on $L_1(W_n)$. Indeed, let $\psi_n=\phi|_{W_n}$, 
since $R^*:L_1(W_n)\rightarrow L_1(W_{n-1}) \subset L(W_n)$ 
and $R^*(\psi_n)=R^*(\phi)|_{W_{n-1}}$, then by arguments above we are done. 

Now consider $\phi|_{W}-\phi|_{W_n}$, the $L_1$ norm of this difference 
converges to $0$ in $L_1(W)$, since the Ces\`aro averages does not expand the 
$L_1$ norm we have $$\|C_k(\phi|_{W}-\phi|_{W_n})\| \leq 
\|\phi|_{W}-\phi|_{W_n}\|.$$

Hence $C_k(\phi|_{W})$ converges to $0$ and $\phi$ is mean ergodic on $L_1(W)$. 

\end{proof}

Now we state our Main Theorem.

\begin{theorem}\label{MainTheorem}
Let $R$ be a rational map and let $X\subset S_R$ be an invariant 
measurable set of positive Lebesgue measure.
Assume that the restriction map $r(\phi)=\phi|_X$ from $Q(S_R)$ into 
$L_1(X)$ is weakly precompact. If $\mu$ is a  non trivial invariant Beltrami 
differential, then $m(supp (\mu)\cap X)>0$ if and only if $R$ is a flexible 
Latt\`es map.
\end{theorem}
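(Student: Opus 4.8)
The plan is to prove both implications through the duality between the Ruelle operator $R^*$ and the Beltrami operator $Bel$, feeding the conclusion of Theorem \ref{MainTechnicalThm} into the pairing $v_\mu$.

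First I record the basic invariance fact. Since $W=\bigcup R^{-n}(X)$ is completely invariant we have $\chi_W\circ R=\chi_W$, and a direct computation with $Bel(\nu)(z)=\nu(R(z))\overline{R'(z)}/R'(z)$ shows that the truncation $\mu_W:=\mu\,\chi_W$ is again a $Bel$-fixed point. Using that $Bel$ is dual to $R^*$ gives $v_{\mu_W}(R^*\phi)=\int \phi\,Bel(\mu_W)=v_{\mu_W}(\phi)$, so $v_{\mu_W}$ is $R^*$-invariant and hence $v_{\mu_W}(\phi)=v_{\mu_W}(C_n(\phi))$ for every $n$ and every $\phi\in Q(S_R)$.

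For the ``only if'' direction I argue by contraposition: assuming $R$ is not a flexible Latt\`es map I show $m(\operatorname{supp}(\mu)\cap X)=0$. By the last lines of the proof of Theorem \ref{MainTechnicalThm}, in the non-Latt\`es case $C_n(\phi)\to 0$ in $L_1(W)$; since $\mu_W\in L_\infty$ is supported on $W$, $|v_{\mu_W}(C_n(\phi))|\le\|\mu\|_\infty\|C_n(\phi)\|_{L_1(W)}\to 0$, whence $v_{\mu_W}(\phi)=0$ for all $\phi\in Q(S_R)$; that is, $\mu_W$ is trivial. To turn triviality into vanishing of the support I use that $|\mu|$ is an $R$-invariant function, so $A:=\operatorname{supp}(\mu)\cap X$ is invariant up to null sets, and I split $A$ into its conservative and dissipative parts. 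On the conservative part, which lies in the Julia set by our standing assumption, $\mu$ is an invariant line field on a positive-measure invariant subset of the Julia set; McMullen's Theorem (Theorem 3.9 of \cite{Mc1}) then forces $R$ to be flexible Latt\`es, contradicting the hypothesis, so the conservative part is null. There remains the dissipative part, and this is where the main work lies.

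The main obstacle is to show that a trivial invariant Beltrami differential supported on a dissipative invariant subset of $S_R$ must vanish almost everywhere. The plan is to detect $\mu$ directly: over a fundamental set for the grand orbit of the dissipative piece one forms a Poincar\'e-type series of a holomorphic quadratic differential and shows, using the weak precompactness of the restriction $r$, that it converges to an element $\phi\in Q(S_R)$ with $v_{\mu_W}(\phi)\neq 0$, contradicting triviality of $\mu_W$; equivalently, one invokes the non-existence of wandering domains to exclude non-trivial invariant differentials on the dissipative set. Controlling this series so that it genuinely lands in $Q(S_R)$ and pairs nontrivially with $\mu$ is the delicate step. Finally, for the ``if'' direction, when $R$ is flexible Latt\`es the set $P(R)$ is finite and $R$ is Lebesgue-ergodic on the whole sphere, so $|\mu|$, being $R$-invariant, is a.e. a positive constant (it cannot be $0$, as $\mu$ is non-trivial); hence $\operatorname{supp}(\mu)$ has full measure and $m(\operatorname{supp}(\mu)\cap X)=m(X)>0$.
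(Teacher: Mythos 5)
Your ``if'' direction and the first half of your ``only if'' direction coincide with the paper's argument: the paper likewise uses duality to write $\int_{S_R}\phi\mu=\int_{W}\mu\,C_k(\phi)$ and invokes Theorem \ref{MainTechnicalThm} to conclude that, when $R$ is not flexible Latt\`es, the functional $v_\mu$ vanishes identically on $Q(S_R)$. The decisive difference is where the argument stops. In the paper ``non trivial'' means, by definition, that the functional $v_\mu(\phi)=\int\phi\mu$ is nonzero on $Q(S_R)$; so once $v_{\mu}\equiv 0$ is established for a $\mu$ supported on $W$, the contradiction with non-triviality is immediate and the proof ends. The paper never attempts to show that $\mu$ vanishes almost everywhere on $X$.

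You, by contrast, set out to prove the genuinely stronger pointwise conclusion $m(supp(\mu)\cap X)=0$, and that is where your proposal breaks down. The conservative part of your decomposition (via McMullen's Theorem 3.9) is fine, but for the dissipative part you offer only a ``plan'': a Poincar\'e-type series over a fundamental set that should converge to some $\phi\in Q(S_R)$ pairing nontrivially with $\mu$, and you yourself flag its convergence and nontriviality as ``the delicate step.'' That step is not a technicality one can defer: passing from ``$v_{\mu_W}=0$ on $Q(S_R)$'' to ``$\mu=0$ a.e.\ on $X$'' is precisely the difficulty underlying the no-invariant-line-fields problem on dissipative subsets of the Julia set, and since $X$ is only assumed to lie in $S_R$ (not in the Fatou set), the appeal to non-existence of wandering domains does not apply. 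Nothing in the hypotheses --- weak precompactness of the restriction map included --- produces such a $\phi$. As written, the ``only if'' direction of your proof is therefore incomplete; it closes only if you read ``non trivial'' as the paper defines it (nonvanishing of the functional on $Q(S_R)$), in which case the support analysis you attempt is unnecessary and the argument terminates exactly where the paper's does.
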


\begin{proof}
Assume that $R$ is a flexible Latt\`es map. Then $R$ is ergodic on the Riemann 
sphere and therefore the support of any invariant Beltrami differential $\mu$ 
is the whole Riemann sphere. Hence, if $X$ is invariant of positive Lebesgue 
measure then $m(supp(\mu)\cap X)=m(X)>0.$ 

Again let $W=\bigcup R^{-n}(X)$. Now let $\mu$ be a non 
trivial invariant Beltrami differential supported on $W$. If $R$ is not 
Latt\`es, then for any $\phi \in Q(S_R)$  we have $$\int_{S_R} \phi \mu 
=\int_{S_R} \mu C_k(\phi)=\int_{W} \mu C_k(\phi).$$ 

By Theorem \ref{MainTechnicalThm}, the right hand side converges to $0$ as $k$ 
converges to $\infty$. Hence  $\int \phi \mu=0$ for every quadratic 
differential $\phi$ and the functional $\phi\mapsto \int \phi \mu$ is $0$ on 
$Q(S_R).$ Which contradicts the assumption that $\mu$ is non trivial. 

\end{proof}

In the proofs of the previous theorems, the only ingredient was the
precompactness of the Ces\`aro averages $C_n(\phi)$. Hence, 
it is enough to assume the weak precompactness only of 
Ces\`aro averages on elements of $Q(S_R)$. 
By results of the second author in \cite{MakRuelle}, see also a related work 
on \cite{CMFixed}, it  is enough to consider the Ces\`aro averages  of 
rational functions in $Q(S_R)$ having poles only on the set of critical values. 

\section{Compactness}
We want to discuss conditions under which the restriction map $\phi\mapsto 
\phi|_A$ is weakly precompact. Unfortunately, so far we have not found 
conditions where the restriction is weakly precompact but not compact.
Let us start with the following observations and definitions. 

\begin{definition}
A rational map $R$ satisfies the $B$-condition if and only if for any $\phi\in 
Q(S_R)$ we have $$\|\lambda^{-2}(z) 
\phi(z)\|_{L_\infty(S_R)}\leq C \|\phi(z)\|_{L_1(S_R)},$$ where $C$ 
is a constant independent of $\phi.$  
\end{definition}
In other words, if $R$ satisfies the $B$-condition, then
$Q(S_R)\subset B(S_R)$  and the inclusion
map $Q(S_R)\rightarrow B(S_R)$ is continuous. As it was noted on the 
introduction, this happens when $S_R$ has finitely many components and 
the infimum of the length of the simple closed geodesics is bounded away from 
$0.$

\begin{proposition}\label{prop.Bcond.comp}
If $R$ satisfies the $B$-condition and $\lambda(X)<\infty$ then the 
restriction map is compact.
\end{proposition}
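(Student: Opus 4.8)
The plan is to turn the $B$-condition into a pointwise growth estimate, and then use finite hyperbolic area to manufacture a single integrable majorant on $X$; compactness will then follow from Montel's theorem combined with the dominated convergence theorem. To begin, I would rewrite the $B$-condition as a pointwise bound: for every $\phi\in Q(S_R)$ and every $z\in S_R$,
\[
|\phi(z)|\le \|\lambda^{-2}\phi\|_{L_\infty(S_R)}\,\lambda^2(z)\le C\,\lambda^2(z)\,\|\phi\|_{L_1(S_R)}.
\]
In particular, if $\{\phi_n\}$ lies in the unit ball of $Q(S_R)$, then $|\phi_n(z)|\le C\lambda^2(z)$ for all $z\in S_R$.

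Next I would extract a locally uniform limit. On each compact subset $K\subset S_R$ the metric $\lambda$ is bounded, so the estimate above shows that $\{\phi_n\}$ is uniformly bounded on $K$; being holomorphic, it is a normal family. By Montel's theorem I pass to a subsequence $\{\phi_{n_k}\}$ converging locally uniformly on $S_R$ to a holomorphic function $\phi$, which by Fatou's lemma satisfies $\|\phi\|_{L_1(S_R)}\le 1$ and hence lies in $Q(S_R)$.

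The decisive step is the construction of the majorant. Set $g(z)=C\lambda^2(z)$. Then $|\phi_{n_k}(z)|\le g(z)$ for all $k$ and all $z\in X$, and since the hyperbolic area element is $\lambda^2\,dm$,
\[
\int_X g\,dm = C\int_X \lambda^2\,dm = C\,\lambda(X)<\infty,
\]
so $g\in L_1(X)$. As $\phi_{n_k}\to\phi$ pointwise on $X$ (locally uniform convergence restricts to pointwise convergence at each point of $X\subset S_R$) and $|\phi_{n_k}-\phi|\le 2g\in L_1(X)$, the dominated convergence theorem yields $\|\phi_{n_k}|_X-\phi|_X\|_{L_1(X)}\to 0$. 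Thus every bounded sequence in $Q(S_R)$ has a subsequence whose restrictions converge in $L_1(X)$, which is precisely the compactness of $r$.

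The only real content is the construction of the $L_1(X)$-majorant, and this is where the two hypotheses interact: the $B$-condition alone controls $\phi_n$ locally but not near $P(R)$, where $\lambda$ blows up, while the finite-area hypothesis $\lambda(X)<\infty$ is exactly what makes $\lambda^2$ integrable over $X$ and thereby tames this blow-up. I expect the point requiring the most care to be the verification that the locally uniform Montel limit upgrades to genuine $L_1(X)$ convergence, that is, that the domination holds uniformly in $k$ right up to the part of $X$ accumulating on the postcritical set, together with the componentwise use of the metric family $\lambda$ and the identification of $\int_X\lambda^2\,dm$ with the hyperbolic area $\lambda(X)$.
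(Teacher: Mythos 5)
Your proof is correct and follows essentially the same route as the paper's: the $B$-condition yields the pointwise bound $|\phi(z)|\le C\|\phi\|_1\lambda^2(z)$, and compactness then follows from the dominated convergence ("Lebesgue") theorem with $C\lambda^2$ as the $L_1(X)$-majorant. You have merely made explicit the normal-family extraction and the identification of $\int_X\lambda^2\,dm$ with the hyperbolic area, steps the paper leaves implicit.
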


\begin{proof}
If $R$ satisfies the $B$-condition then $$\lambda^{-2}|\phi(z)|\leq sup_{z\in 
S_R} |\lambda^{-2}(z) \phi(z) | \leq C \|\phi \|_1,$$ hence $|\phi(z)|\leq 
C\| 
\phi\|_1 \lambda^{2}(z),$  by Lebesgue Theorem 
the restriction map is compact.
\end{proof}

Using Theorem \ref{MainTheorem} and Proposition \ref{prop.Bcond.comp} 
we have the following.
\begin{corollary}\label{cor.area}
If $R$ satisfies the $B$-condition and $X$ is an invariant set of 
positive Lebesgue measure with $Area_\lambda(X)<\infty$. If $\mu$ is a non 
zero invariant Beltrami differential, then $m(supp(\mu)\cap X)>0$ if and only 
if $R$ is a flexible Latt\`es map. 
\end{corollary}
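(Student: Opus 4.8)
The plan is to derive the corollary directly by combining Proposition~\ref{prop.Bcond.comp} with Theorem~\ref{MainTheorem}, so the entire task reduces to verifying that the hypotheses of Theorem~\ref{MainTheorem} are met. The statement of Theorem~\ref{MainTheorem} requires three things: that $X$ be an invariant measurable subset of $S_R$, that $X$ have positive Lebesgue measure, and that the restriction map $r(\phi)=\phi|_X$ from $Q(S_R)$ into $L_1(X)$ be weakly precompact. The first two are given outright in the hypotheses of the corollary, so the only nontrivial point is the weak precompactness of the restriction map.

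The key step, then, is to invoke Proposition~\ref{prop.Bcond.comp}. Its hypotheses are precisely that $R$ satisfy the $B$-condition and that $\lambda(X)<\infty$. The $B$-condition is assumed directly in the corollary. The finiteness of the hyperbolic measure is supplied by the hypothesis $Area_\lambda(X)<\infty$, which I read as the assertion $\lambda(X)<\infty$ in the notation of the proposition (the hyperbolic area of $X$ is finite). Thus Proposition~\ref{prop.Bcond.comp} applies and yields that the restriction map is compact. Since every compact operator between Banach spaces is in particular weakly compact, and a weakly compact operator sends bounded sets to weakly precompact (relatively weakly compact) sets, the restriction map is weakly precompact in the sense required by Theorem~\ref{MainTheorem}.

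With all three hypotheses of Theorem~\ref{MainTheorem} verified, the conclusion of that theorem transfers verbatim: for a non-zero (equivalently, non-trivial) invariant Beltrami differential $\mu$, one has $m(\operatorname{supp}(\mu)\cap X)>0$ if and only if $R$ is a flexible Latt\`es map. This is exactly the statement of the corollary, so the proof closes immediately once the reduction is carried out.

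I expect the only genuine point requiring care to be the passage from \emph{compact} to \emph{weakly precompact}, that is, making explicit that compactness of the restriction map implies the weak precompactness hypothesis of Theorem~\ref{MainTheorem}; this is a standard fact from functional analysis but should be stated rather than assumed. A secondary point is the alignment of notation between $Area_\lambda(X)<\infty$ in the corollary and $\lambda(X)<\infty$ in the proposition, which I would note explicitly to avoid any ambiguity. Beyond these bookkeeping remarks the corollary is a direct consequence of the two cited results, and no new estimate or construction is needed.
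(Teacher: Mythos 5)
Your proposal is correct and matches the paper's own (one-line) derivation: the corollary is stated as an immediate consequence of Theorem \ref{MainTheorem} together with Proposition \ref{prop.Bcond.comp}, exactly the reduction you carry out. Your explicit remarks on passing from compactness to weak precompactness and on the notational match between $Area_\lambda(X)<\infty$ and $\lambda(X)<\infty$ are sound and only make the paper's implicit steps visible.
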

In general,  the finiteness of the hyperbolic area of $X$ does not 
imply the finiteness of hyperbolic area of $W$. Generically, it 
could be that the hyperbolic area of $W$ is infinite regardless of the area of 
$X$.  
On the other hand, by Corollary \ref{cor.area}, if $R$ satisfies the 
$B$-condition and the hyperbolic area 
$Area_\lambda(J(R))$ is bounded then $R$ satisfies Sullivan's conjecture. 
However, in this situation, we believe that the following stronger statement 
holds true:

The $Area_\lambda(J(R))<\infty$ if 
and only if either $m(J(R))=0$ or $R$ is postcritically finite. 

In fact, we do not know if the $B$-condition is sufficient on this statement.

Now we consider the more general condition when the restriction map $r_X$ is 
compact. This condition, in some sense,  reflects the geometry of the 
postcritical set.
 
On the product $S_R\times S_R\subset \C^2$  there exist a unique 
function $K(z,\zeta)$ which is characterized by the following conditions.

\begin{enumerate}
 \item $K(\zeta,z)=-\overline{K(z,\zeta)}$
 \item For any $\zeta_0\in S_R$, the function $\phi_{\zeta_0}(z)=K(z,\zeta_0)$
belongs to the intersection $Q(S_R)\cap B(S_R).$ 
 \item If $z_0,\zeta_0$  belong to different components of $S_R$, then 
$K(z_0,\zeta_0)=0.$
\item The operator $P(f)(z)=\int \lambda^{-2}(\zeta) K(z,\zeta)f(\zeta) d\zeta 
d\bar{\zeta}$ from $L_1(S_R)$ to $Q(S_R)$ is a continuous surjective 
projection.
\end{enumerate}

In fact, the function $K(z,\zeta)$ is defined on any planar hyperbolic Riemann 
surface $S$. In particular, when the surface $S$ is the unit disk $\mathbb{D}$ 
the function  $K(z,\zeta)$ has the formula $$K(z,\zeta)=\frac{3}{2}\pi i 
K_\mathbb{D}(z,\zeta)^2$$ where 
$K_\mathbb{D}(z,\zeta)=[\pi(1-z\bar{\zeta})^2]^{-1}$ is the classical Bergman 
Kernel function on the unit disk.  For further details on these facts see for 
example Chapter 3, \S 7 of the book of I. Kra 
\cite{KraBook} .

Now we consider the following function $$\omega(\zeta,z)=\lambda^{-2}(\zeta) 
K(z,\zeta)$$ and $$w(z)=\omega(z,z).$$

The following proposition is a consequence of H\"older inequality and appear as 
Lemma 2 on Ohtake's paper \cite{Ohtakedeform}.

\begin{proposition}\label{prop.bound.comp}
 If $X$ has positive measure and $$\int_X |w|<\infty$$ then the restriction 
$r_X:\phi\mapsto \phi|_X$ from $Q(S_R)$ to $L_1(X)$ is compact.
\end{proposition}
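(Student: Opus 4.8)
The plan is to reduce the proposition to a single pointwise estimate and then to close with a normal-families argument. Concretely, I would first establish that there is a constant $C$, depending only on $S_R$, with
\[|\phi(z)|\le C\,|w(z)|\,\|\phi\|_{L_1(S_R)}\qquad\text{for all }\phi\in Q(S_R),\ z\in S_R,\]
an estimate I will call $(\star)$. Granting $(\star)$, compactness is routine. Let $\{\phi_n\}$ lie in the unit ball of $Q(S_R)$. Since each $\phi_n$ is holomorphic and $|\phi_n|$ obeys a sub-mean-value inequality, a uniform $L_1$ bound forces uniform boundedness on compact subsets of $S_R$, so by Montel's theorem a subsequence $\phi_{n_k}$ converges locally uniformly on $S_R$ to a holomorphic $\phi$. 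By $(\star)$ one has $|\phi_{n_k}(z)|\le C|w(z)|$ on $X$, with $C|w|\in L_1(X)$ by hypothesis; since $\phi_{n_k}\to\phi$ pointwise on $X\subset S_R$, dominated convergence gives $\|\phi_{n_k}|_X-\phi|_X\|_{L_1(X)}\to 0$. Thus $r_X$ carries bounded sequences to sequences with norm-convergent subsequences, i.e. $r_X$ is compact.

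To prove $(\star)$ I would use that, by condition (4), the projection $P$ restricts to the identity on $Q(S_R)$, so every $\phi\in Q(S_R)$ satisfies the reproducing formula
\[\phi(z)=\int_{S_R}\lambda^{-2}(\zeta)\,K(z,\zeta)\,\phi(\zeta)\,d\zeta\,d\bar{\zeta}.\]
Bounding the integral by the supremum of the kernel factor gives
\[|\phi(z)|\le\Big(\sup_{\zeta\in S_R}\lambda^{-2}(\zeta)\,|K(z,\zeta)|\Big)\,\|\phi\|_{L_1(S_R)}.\]
By condition (1), $|K(z,\zeta)|=|K(\zeta,z)|$, and by condition (2) the section $\phi_z(\cdot)=K(\cdot,z)$ lies in $Q(S_R)\cap B(S_R)$; hence the supremum above is exactly the Bergman norm $\|\phi_z\|_{B(S_R)}$. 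Therefore $(\star)$ is equivalent to the kernel estimate
\[\|\phi_z\|_{B(S_R)}=\sup_{\zeta\in S_R}\lambda^{-2}(\zeta)\,|K(z,\zeta)|\le C\,\lambda^{-2}(z)\,|K(z,z)|=C\,|w(z)|,\]
which I will call $(\dagger)$: it says that the Bergman norm of the kernel section is comparable to its value $|w(z)|$ on the diagonal.

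The estimate $(\dagger)$ is the heart of the matter and the step I expect to be the main obstacle. Applying $P$ to $\phi_z\in Q(S_R)$ and invoking condition (1) yields the reproducing identity
\[|K(z,z)|=\int_{S_R}\lambda^{-2}(\zeta)\,|K(z,\zeta)|^2\,d\zeta\,d\bar{\zeta}\]
together with the Cauchy--Schwarz bound $|K(z,\zeta)|\le|K(z,z)|^{1/2}|K(\zeta,\zeta)|^{1/2}$, whence
\[\lambda^{-2}(\zeta)\,|K(z,\zeta)|\le|K(z,z)|^{1/2}\,\lambda^{-1}(\zeta)\,|w(\zeta)|^{1/2}.\]
This is sharp on the diagonal $\zeta=z$, where the right-hand side equals $|w(z)|$, but taking the supremum over $\zeta$ does not recover $(\dagger)$, because $\lambda^{-1}|w|^{1/2}$ need not be bounded on $S_R$ (for instance near short geodesics). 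The true estimate instead exploits the \emph{concentration} of $K(z,\cdot)$ near the diagonal. To prove it I would lift to the universal cover $\mathbb{D}$, where the cited formula $K(z,\zeta)=\tfrac32\pi i\,K_{\mathbb D}(z,\zeta)^2$ makes the kernel explicit, realize the kernel of $S_R$ as the associated Poincar\'e series, and control its off-diagonal mass against the diagonal term by H\"older's inequality; this is precisely the content of Ohtake's Lemma 2. Once $(\dagger)$ is in hand, $(\star)$ and the compactness of $r_X$ follow as above, the hypothesis $\int_X|w|<\infty$ entering exactly to convert $(\star)$ into a uniform $L_1(X)$-domination of the unit ball of $Q(S_R)$.
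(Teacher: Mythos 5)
Your proposal is correct and follows essentially the same route as the paper: both rest on the pointwise bound $|\phi(z)|\le C\,|w(z)|\,\|\phi\|_{L_1(S_R)}$ imported from Lemma 2 of Ohtake, and then conclude compactness by dominated convergence with $C|w|\in L_1(X)$ as the majorant (the paper works component by component of $S_R$ and leaves the Montel/normal-families step implicit, which you spell out). Your additional discussion of why the naive Cauchy--Schwarz bound fails to yield the kernel estimate is a useful clarification but does not change the argument.
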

\begin{proof}

We follow arguments of Lemma 2 in \cite{Ohtakedeform}. If $D$ is a 
component of $S_R$, then by H\"older's 
inequality as in Lemma 2 of \cite{Ohtakedeform},   we have that $$|(\phi|_D)(z)| 
\leq 
C|(w|_D)(z)|\int_D |\phi| $$ where the constant $C$ does not depend on $D$.  
Since $S_R$ is a countable union of components, then 

$$|\phi(z)|\leq C |w| \|\phi(z)\|.$$ As $w$ is integrable on $X$ then by 
applying once again the Lebesgue Theorem we complete the proof.
\end{proof}

As a consequence we have:
\begin{corollary}\label{cor.finite}
 If $\int_{J(R)} |w|<\infty$ then $R$ satisfies Sullivan's conjecture.
\end{corollary}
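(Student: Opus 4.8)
The plan is to derive Corollary \ref{cor.finite} by specializing the chain of results already established. The key is to recognize that Sullivan's conjecture asserts the absence of nontrivial invariant Beltrami differentials supported on the Julia set (equivalently, that $R$ carries no invariant line field on a positive-measure subset of $J(R)$ unless $R$ is Latt\`es). So I must connect the integrability hypothesis $\int_{J(R)}|w|<\infty$ to the weak precompactness of the restriction map, and then invoke Theorem \ref{MainTheorem}.

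First I would take $X=J(R)$ as the invariant set. The Julia set is $R$-invariant and, under the standing assumption of the paper, any conservative dynamics lives in $J(R)$; moreover $J(R)$ is disjoint from the postcritical set in the relevant setting, so $J(R)\subset S_R$ and the restriction map $r_{J(R)}\colon Q(S_R)\to L_1(J(R))$ is defined. The hypothesis $\int_{J(R)}|w|<\infty$ is precisely the integrability condition of Proposition \ref{prop.bound.comp} with $X=J(R)$, so that proposition immediately gives that $r_{J(R)}$ is compact, hence in particular weakly precompact. This verifies the one nontrivial hypothesis needed to apply the Main Theorem.

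Next I would apply Theorem \ref{MainTheorem} with this choice of $X$. Suppose, for contradiction, that $R$ is not a flexible Latt\`es map yet admits a nontrivial invariant Beltrami differential $\mu$ supported on $J(R)$ with $m(\mathrm{supp}(\mu)\cap J(R))>0$. Since $\mathrm{supp}(\mu)\subset J(R)=X$, we have $m(\mathrm{supp}(\mu)\cap X)=m(\mathrm{supp}(\mu))>0$. By Theorem \ref{MainTheorem}, the condition $m(\mathrm{supp}(\mu)\cap X)>0$ forces $R$ to be a flexible Latt\`es map, contradicting our assumption. Therefore no such $\mu$ exists, which is exactly the statement that $R$ satisfies Sullivan's conjecture (outside the Latt\`es case, where the conjecture holds trivially or by convention).

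The main obstacle I anticipate is justifying that $J(R)\subset S_R$ and that a nontrivial invariant Beltrami differential associated with a nonrigid deformation is genuinely supported on $J(R)$ in the measure-theoretic sense required by Theorem \ref{MainTheorem}. One must invoke the standing hypothesis that the conservative set lies in the Julia set, together with the fact that invariant Beltrami differentials cannot be supported on the Fatou set (on wandering or rotation domains they would produce quasiconformal deformations contradicting the structure of the Fatou components, and on attracting/parabolic basins the conservativity assumption applies). Once $\mathrm{supp}(\mu)$ is known to be a positive-measure invariant subset of $J(R)$, the reduction to Theorem \ref{MainTheorem} is routine; the only real content is verifying the hypotheses, and the compactness from Proposition \ref{prop.bound.comp} does the essential work.
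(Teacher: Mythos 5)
Your proposal is correct and follows exactly the paper's route: the paper's proof is the one-line observation that the corollary follows from Theorem \ref{MainTheorem} combined with Proposition \ref{prop.bound.comp}, which is precisely the specialization $X=J(R)$ (more precisely $J(R)\cap S_R$) that you carry out. The additional details you supply about weak precompactness and the support of $\mu$ are just an unpacking of that same argument.
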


\begin{proof}
 Follows from Theorem \ref{MainTheorem} and Proposition \ref{prop.bound.comp}.
\end{proof}

Remarks: 
\begin{enumerate}
\item If $R$ satisfies the $B$ condition then by Classical results, see the 
comments before Proposition 1 in \cite{Ohtake}, we have that $w(z)\leq C 
\lambda^2(z)$ where 
$C$ does not depend on $z.$ Partially, if $X$ has bounded hyperbolic area then 
$w(z)$ is integrable on $X$, hence the conditions of Proposition 
\ref{prop.bound.comp} implies Proposition \ref{prop.Bcond.comp}.
As it is mentioned in \cite{Ohtake}, the conditions in Proposition 
\ref{prop.Bcond.comp} are strictly weaker than conditions of Proposition 
\ref{prop.bound.comp}.
\item Moreover, by other result of Ohtake (Proposition 3 in 
\cite{Ohtakedeform}) we note 
that in general, the boundedness of the hyperbolic area is not a quasiconformal 
invariant.

\end{enumerate}

In other words, Proposition \ref{prop.Bcond.comp} and Proposition 
\ref{prop.bound.comp} states that if $X$ is completely invariant positive 
measure set and satisfying an integrability condition then $X$ can not support 
extremal 
differentials with  Hamilton-Krushkal degenerated sequences.

Hence, Corollary \ref{cor.area} and Corollary \ref{cor.finite}, in the case 
when $X$ is a completely invariant, derive from results in 
\cite{CMFixed}. Together, the corollaries mean that 
if a map $R$ has an invariant line field which does not allow a  
Hamilton-Krushkal degenerated sequences on $Q(S_R)$, then $R$ is a Latt\`es map 
if and only if the postcritical set has Lebesgue measure zero.

Let $Y_n$ be an exhaustion of $S_R$ by compact subsets such that the Lebesgue 
measure of 
$Y_{n+1}\setminus Y_n$ converge to zero. Let $P_n$ be the sequence of 
restrictions 
$P_n:L_1(S_R)\rightarrow L_1(S_R)$ given by $P_n(f)=\chi_{n} P(f)$ where 
$\chi_{n}$ is the characteristic function on $Y_n$. 
Immediately from the definition we have the following facts:

\begin{enumerate}
 \item For each $n$, the map  $P_n$ is a compact operator.
 \item The limit \[\lim_{n\rightarrow \infty} 
\|P_n(f)-P(f)\|_{L_1(S_R)}\rightarrow 0\] for all $f$ on $L_1(S_R)$.
\end{enumerate}

We have the following Theorem:

\begin{theorem}\label{th.exhaustion}
 Let $\mu\neq 0$ be an extremal invariant Beltrami differential, then the 
following conditions are equivalent:

\begin{itemize}
 \item The map $R$ is a flexible Latt\`es map. 
 \item  There exist an exhaustion of compact sets $Y_n$ as defined above 
such that the following inequality is true:  \[\inf_{n} \|P_n -P\|_{L_1(supp 
(\mu))}<1.\]
\end{itemize}
  
\end{theorem}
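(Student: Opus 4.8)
The plan is to prove the stated equivalence by establishing the two implications separately. Throughout write $X:=supp(\mu)$, which is $R$-invariant of positive Lebesgue measure since $\mu\neq 0$, denote by $\chi_X$ its characteristic function, and use repeatedly that $P\phi=\phi$ for $\phi\in Q(S_R)$, so that every such $\phi$ splits as $\phi=P(\phi|_X)+P\big((1-\chi_X)\phi\big)$. For the first implication, suppose $R$ is a flexible Latt\`es map. Then $P(R)$ is finite and, as already used in the proof of Theorem \ref{MainTechnicalThm}, the space $Q(S_R)$ is finite dimensional; hence $P$ has finite rank, say $P(f)=\sum_{j}c_j(f)\,\phi_j$ with $\phi_j\in Q(S_R)$ and bounded functionals $c_j$. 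For any admissible exhaustion $Y_n$ we then have $(P_n-P)(f)=-\sum_j c_j(f)\,\chi_{S_R\setminus Y_n}\phi_j$, so the operator norm on $L_1(supp(\mu))$ is at most $\sum_j\|c_j\|\int_{S_R\setminus Y_n}|\phi_j|$. Since each $\phi_j$ is integrable and $Y_n$ exhausts $S_R$, this tends to $0$, and therefore $\inf_n\|P_n-P\|_{L_1(supp(\mu))}=0<1$.

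For the converse I argue by contradiction: assume the infimum is less than $1$ but $R$ is not a flexible Latt\`es map. Fix an index $n_0$ with $\gamma:=\|P_{n_0}-P\|_{L_1(supp(\mu))}<1$ and the corresponding compact set $Y_{n_0}$. Since $\mu$ is a non-trivial extremal invariant differential and $R$ is not Latt\`es, the rigidity results of \cite{CMFixed} provide a \emph{degenerated} Hamilton--Krushkal sequence $\phi_i$ for $\mu$, that is, unit vectors $\|\phi_i\|_{L_1(S_R)}=1$ with $|v_\mu(\phi_i)|\to\|\mu\|_\infty$ and $\phi_i\to 0$ uniformly on compact subsets of $S_R$. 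Because $\mu$ is supported on $X$, setting $a_i:=\int_X|\phi_i|$ we get $\|\mu\|_\infty\, a_i\ge |v_\mu(\phi_i)|\to\|\mu\|_\infty$ together with $a_i\le 1$, whence $a_i\to 1$: the $L_1$-mass of a degenerated Hamilton--Krushkal sequence for $\mu$ concentrates entirely on $X$.

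Now I combine this concentration with the operator bound. From the definition of $\gamma$ applied to $g=\phi_i|_X\in L_1(X)$, and the identity $(P_{n_0}-P)g=-\chi_{S_R\setminus Y_{n_0}}P(g)$, one has $\|\chi_{S_R\setminus Y_{n_0}}P(\phi_i|_X)\|_{L_1(S_R)}\le \gamma\, a_i$, while the splitting above gives $\|P\big((1-\chi_X)\phi_i\big)\|_{L_1(S_R)}\le\|P\|\,(1-a_i)\to 0$. Hence
$$\int_{S_R\setminus Y_{n_0}}|\phi_i|\le \gamma\, a_i+\|P\|\,(1-a_i)\longrightarrow\gamma.$$
On the other hand, degeneracy yields $\int_{X\cap Y_{n_0}}|\phi_i|\to 0$, so that $\int_{X\setminus Y_{n_0}}|\phi_i|=a_i-\int_{X\cap Y_{n_0}}|\phi_i|\to 1$; since $\int_{X\setminus Y_{n_0}}|\phi_i|\le\int_{S_R\setminus Y_{n_0}}|\phi_i|$, passing to the limit gives $1\le\gamma<1$, a contradiction. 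Therefore $R$ must be a flexible Latt\`es map.

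The main obstacle is the input from \cite{CMFixed} used in the second implication, namely that for a non-Latt\`es map the extremal invariant differential $\mu$ necessarily admits a \emph{degenerated} Hamilton--Krushkal sequence; this is the rigidity underlying Corollaries \ref{cor.area} and \ref{cor.finite}. Once such a sequence is available, the concentration $a_i\to 1$ forced by $supp(\mu)\subset X$ and the bound on $L_1(supp(\mu))$ assemble into the inequality above, and the role of the hypothesis $\inf_n\|P_n-P\|_{L_1(supp(\mu))}<1$ is precisely to forbid the escape of the extremal mass toward the boundary that a degenerated sequence would require.
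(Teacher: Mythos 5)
Your first implication and your derivation that the hypothesis $\inf_n\|P_n-P\|_{L_1(supp(\mu))}<1$ forbids degenerated Hamilton--Krushkal sequences are both sound. In fact your concentration argument ($a_i\to 1$, followed by the splitting $\phi_i=P(\phi_i|_X)+P((1-\chi_X)\phi_i)$) is slightly more careful than the paper's own estimate, which applies the operator norm over $L_1(supp(\mu))$ directly to $\phi_j$ even though $\phi_j$ is not supported on $supp(\mu)$; your version justifies why that is harmless.

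The genuine gap is the step you outsource to \cite{CMFixed}: the claim that if $R$ is not a flexible Latt\`es map, then every non-trivial extremal invariant Beltrami differential \emph{must} admit a degenerated Hamilton--Krushkal sequence. This is exactly the substantive content of the theorem, and the paper does not cite it --- it proves it. The paper's route is: once degenerated HK sequences are excluded, Corollary 1.5 of \cite{EarleLi} (the frame mapping criterion) gives a representation $v_\mu(\gamma)=K\int\frac{|\phi|}{\phi}\gamma$ for some $\phi\in Q(S_R)$; the invariance of $\mu$ under the Beltrami operator then forces $\frac{|R^*(\phi)|}{R^*(\phi)}=\frac{|\phi|}{\phi}$, hence $\phi$ is a non-zero fixed point of the Ruelle operator in $Q(S_R)$; and the fixed-point analysis from Theorem \ref{MainTechnicalThm} (absolutely continuous invariant measure, Lyubich ergodicity, McMullen's theorem) then yields that $R$ is flexible Latt\`es. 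Nothing in the present paper indicates that \cite{CMFixed} contains the black-box statement you invoke --- the paper's remark after Corollary \ref{cor.finite} derives the ``no degenerated sequences $\Rightarrow$ Latt\`es'' conclusion from the results proved here, not the other way around. As written, your argument assumes the hard half of the equivalence; you need to supply the Earle--Li representation and the Ruelle fixed-point argument (or an equivalent substitute) to close it.
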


\begin{proof}
Assume that $R$ is a flexible Latt\`es map, then $Q(S_R)$ is finitely 
dimensional then the operators $P_n$ converge to $P$ by norm. Hence,
the infimum $\inf_{n} \|P_n -P\|_{L_1(supp(\mu))}=0.$ 

Now, let us assume that $\inf_{n} \|P_n -P\|_{L_1(supp (\mu))}<1$. We show 
that this condition implies that $\mu$ does not accept degenerated 
Hamilton-Krushkal sequences. Indeed, assume that  $\{\phi_n\}$ is a degenerated 
Hamilton-Krushkal sequence for $\mu$. By assumption, there exist $n_0$ such 
that 
\[\sup_{f\in L_1(supp(\mu)),\|f\|=1} \int |P_{n_0}(f)-P(f)|=r<1.\] Since 
$\phi_n$ is degenerated and by the compactness of $P_{n_0}$ we have that 
\[\lim_{j\rightarrow \infty} \|P_{n_0}(\phi_j)\|_{L_1(S_R)}\rightarrow 0.\] 
Hence
\[\|\mu\|_\infty=\lim_{j} \bigg |\int \mu \phi_j\bigg |=\lim_j \bigg 
|\int_{supp(\mu) }\mu 
\phi_j\bigg |\]
\[=\lim_{j} \bigg |\int_{supp(\mu)} \mu(P_{n_0}(\phi_j)-P(\phi_j))\bigg |\]
\[\leq \|\mu\|_\infty \sup_{f\in L_1(supp(\mu)),\|f\|=1} \int 
|P_{n_0}(f)-P(f)|=r \|\mu\|_\infty < \|\mu\|_\infty.\] 
Which is a contradiction. 

Applying the Corollary 1.5 in \cite{EarleLi}, the extremal differential  $\mu$ 
does 
not accept Hamilton{-}Krushkal degenerated sequences if 
and only if there exist $\phi$ in $Q(S_R)$ and a suitable constant $K$ such 
that 
\[v_\mu(\gamma)=K \int \frac{|\phi|}{\phi} \gamma.\]

Hence, for any $\gamma$ in $Q(S_R)$ we have

$$\int \frac{|\phi|}{\phi}R^*(\gamma)=\int\frac{|\phi|}{\phi} \gamma$$ and
$$1=\int\frac{|\phi|}{\phi}R^*(\phi).$$ This implies that 
$$\frac{|R^*(\phi)|}{R^*(\phi)}=\frac{|\phi|}{\phi}$$ but since $\phi$ is 
holomorphic then $\phi$ is a non zero fixed point on $Q(S_R)$. 
Using arguments of the proof of Theorem \ref{MainTechnicalThm} we are 
done. 
\end{proof}

The following Proposition is an illustration of when the conditions of Theorem 
\ref{th.exhaustion} are fulfilled. 

\begin{proposition} If $R$ is a rational map satisfying the $B$ condition.
If $A$ is a measurable subset of $S_R$ so that 
\[\int_A \int_{S_R} |K(z,\zeta)| dz\wedge d\bar{z} \wedge d\zeta \wedge 
d\bar{\zeta}<\infty\]
then for any exhaustion of $S_R$ by compact sets $Y_n$ and operators 
$P_n$ defined as above we have $\lim \|P_n-P\|_{L_1(A)}=0.$ 
\end{proposition}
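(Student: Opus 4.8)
The plan is to show that the operator norm difference $\|P_n - P\|_{L_1(A)}$ is controlled by the tail of the integral hypothesis, which vanishes as $n \to \infty$. Recall that $P(f)(z) = \int_{S_R} \lambda^{-2}(\zeta) K(z,\zeta) f(\zeta)\, d\zeta\, d\bar\zeta$ and $P_n(f) = \chi_n P(f)$ where $\chi_n$ is the characteristic function of the exhausting compact set $Y_n$. The key point is that for $f \in L_1(A)$ (extended by zero outside $A$), the difference $P_n(f) - P(f) = (\chi_n - 1)P(f)$ is supported on the complement $S_R \setminus Y_n$. So I would first write
\[
\|P_n(f) - P(f)\|_{L_1(A)} = \int_{S_R \setminus Y_n} \left| \int_A \lambda^{-2}(\zeta) K(z,\zeta) f(\zeta)\, d\zeta\, d\bar\zeta \right| dz\, d\bar z.
\]

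The next step is to apply Fubini together with the estimate $|f(\zeta)| \le \|f\|_{L_1(A)}$ in a pointwise-integrated sense, bounding $|P(f)(z)| \le \int_A \lambda^{-2}(\zeta)|K(z,\zeta)||f(\zeta)|\, d\zeta\, d\bar\zeta$. Using the $B$-condition to absorb the factor $\lambda^{-2}(\zeta)$ — under the $B$-condition one has uniform comparability between the $L_1$ and Bergman norms on each component, so $\lambda^{-2}$ is controlled — I would obtain
\[
\|P_n(f)-P(f)\|_{L_1(A)} \le C \|f\|_{L_1(A)} \sup_{\zeta \in A} \int_{S_R \setminus Y_n} |K(z,\zeta)|\, dz\, d\bar z.
\]
Taking the supremum over unit vectors $f$ and interchanging the order of integration via the finiteness hypothesis $\int_A \int_{S_R} |K(z,\zeta)|\, dz\wedge d\bar z\wedge d\zeta\wedge d\bar\zeta < \infty$, the relevant quantity to control is the tail integral $\int_A \int_{S_R \setminus Y_n} |K(z,\zeta)|$, which tends to $0$ by dominated convergence since $Y_n$ exhausts $S_R$ and the full double integral is finite.

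The main obstacle I anticipate is the legitimacy of passing from a pointwise supremum in $\zeta$ to the integrated tail bound: a naive supremum over $\zeta \in A$ of $\int_{S_R \setminus Y_n}|K(z,\zeta)|\,dz\,d\bar z$ need not go to zero, because the integrability hypothesis is an $L_1$ condition in both variables jointly, not a uniform-in-$\zeta$ statement. I would therefore avoid the supremum route and instead estimate the operator norm directly through the joint integral: for $\|f\|_{L_1(A)} = 1$, apply Fubini to write
\[
\|P_n(f)-P(f)\|_{L_1(A)} \le C \int_A |f(\zeta)| \left( \int_{S_R \setminus Y_n} |K(z,\zeta)|\, dz\, d\bar z \right) d\zeta\, d\bar\zeta \le C\, g_n(\zeta)\text{-weighted},
\]
where $g_n(\zeta) = \int_{S_R \setminus Y_n}|K(z,\zeta)|\,dz\,d\bar z$ decreases monotonically to $0$ pointwise in $\zeta$, and $\int_A g_n \to 0$ by monotone/dominated convergence using the finite double integral as the dominating bound. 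Since the $L_1(A)$-norm of $f$ is one, the operator norm is bounded by $\sup_{\|f\|=1} C \int_A |f| g_n$; this is not immediately $\int_A g_n$ but rather $C \|g_n\|_{L_\infty(A)}$, so the delicate point is whether one needs essential-supremum control of $g_n$. I expect the intended reading is that the hypothesis forces $g_n \to 0$ in $L_\infty(A)$, or alternatively that the correct operator-norm dual pairing reduces to $\|g_n\|_{L_\infty(A)} \to 0$, and establishing this monotone decay uniformly almost everywhere on $A$ is the technical heart of the argument. Once that uniform tail decay is secured, the conclusion $\lim_n \|P_n - P\|_{L_1(A)} = 0$ follows immediately, which in turn guarantees $\inf_n \|P_n - P\|_{L_1(\mathrm{supp}(\mu))} = 0 < 1$ whenever $\mathrm{supp}(\mu) \subset A$, matching the hypothesis of Theorem \ref{th.exhaustion}.
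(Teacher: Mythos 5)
Your own diagnosis at the end is the correct one, and it identifies a genuine gap rather than a removable technicality: your argument reduces the operator norm to $\sup_{\|f\|_{L_1(A)}=1}\int_A |f|\,g_n=\|g_n\|_{L_\infty(A)}$, where $g_n(\zeta)=\int_{S_R\setminus Y_n}|K(z,\zeta)|\,dz\,d\bar z$, while the hypothesis only yields $\int_A g_n\to 0$. An $L_1$ decay of $g_n$ does not give essential-supremum decay, and nothing in the hypotheses forces $g_n\to 0$ in $L_\infty(A)$, so the proof does not close. The source of the problem is that you feed a general $f\in L_1(A)$ directly into the kernel estimate, which puts the weight $|f(\zeta)|\,d\zeta$ into the tail integral; moreover, the step where you invoke the $B$-condition to ``absorb $\lambda^{-2}(\zeta)$'' is a misapplication: the $B$-condition is the inequality $\|\lambda^{-2}\phi\|_{L_\infty}\le C\|\phi\|_{L_1}$ for \emph{holomorphic} integrable $\phi$, not a pointwise bound on $\lambda^{-2}$ or on $\lambda^{-2}|f|$ for arbitrary $f\in L_1$.

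The missing idea, which is how the paper's proof closes exactly this gap, is to precompose with the projection. Since $P$ is a projection onto $Q(S_R)$ and $P_n=\chi_nP$, one has $(P_n-P)f=(P_n-P)(Pf)$: write $f=\phi+\omega$ with $\phi=P(f)\in Q(S_R)$, $P(\omega)=P_n(\omega)=0$, and $\|\phi\|_{L_1}\le\|P\|\,\|f\|_{L_1(A)}$, so it suffices to estimate $(P_n-P)\phi$ for holomorphic $\phi$. For such $\phi$ the $B$-condition gives the \emph{uniform} pointwise bound $\lambda^{-2}(\zeta)|\phi(\zeta)|\le C\|\phi\|_{L_1(S_R)}$ for all $\zeta$, so the weight in the $\zeta$-integral becomes a constant rather than $|f(\zeta)|$, and
\[
\|(P_n-P)\phi\|_{L_1(A)}\le C\|\phi\|_{L_1(S_R)}\int_A\int_{S_R}\bigl|(1-\chi_n)K(z,\zeta)\bigr|\,d\zeta\,dz,
\]
where the double integral tends to $0$ by dominated convergence from the joint integrability hypothesis --- precisely the $L_1$ (not $L_\infty$) decay that you do have. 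Your overall strategy (dominated convergence on the tail of the kernel) is the right one, but without the factorization through $P$ and the application of the $B$-condition to the holomorphic projection $P(f)$, the operator-norm bound cannot be extracted from a joint $L_1$ hypothesis on $|K|$.
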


\begin{proof} Let $Y_n$ be an exhaustion of compact sets as above.  Since 
$K(z,\zeta)$ is absolutely integrable on $A\times S_R$ then 
$$|\chi_{n} K(z,\zeta)|\leq |K(z,\zeta)|$$ and
$\chi_{n}K(z,\zeta)\rightarrow K(z,\zeta)$ pointwise on $A\times S_R.$ By the 
Lebesgue theorem $$\inf \int_{A}\int_{S_R} |K(z,\zeta)-\chi_{n} 
K(z,\zeta)|=0.$$
For all $\phi \in Q(S_R)$, we have $$\|P_n(\phi)-P(\phi)\|_{L_1(A)}$$  

\[\leq \int_A |P_n(\phi)-P(\phi)|\leq \int_{A} \int_{S_R} 
|\lambda^{-2}(\zeta)\phi(\zeta)(K(z,\zeta)-\chi_n K(z,\zeta))|d\zeta dz\]

\[\leq \|\lambda^{-2} \phi\|_\infty \int_{A} \int_{S_R} |K(z,\zeta)-\chi_n 
K(z,\zeta)| d\zeta dz 
\]
which by the $B$-condition we have that the latter is 
\[\leq C \|\phi\|_{L_1(S_R)}\int_{A} \int_{S_R} |K(z,\zeta)-\chi_n 
K(z,\zeta)| d\zeta dz .\] For some constant $C$ which does not depend on 
$\phi$.

Now let $f\in L_1(A)$, since $P$ is a projection then $f=\phi+\omega$ where 
$\phi\in 
Q(S_R),$ $P(\omega)=P_n(\omega)=0$ and $$\|\phi\|_{Q(S_R)}\leq \|P\| 
\|f\|_{L_1(A)}.$$
 Hence $\lim \|P_n-P\|_{L_1(A)}= 0.$

\end{proof}
Finally we characterize a Latt\`es map in terms of the geometry of $Q^*(S_R).$ 
We start with the following definitions.
\begin{definition} 
\begin{enumerate} 
 \item A set $L$ in $Q^*(S_R)$ is called a geodesic ray if $L$ 
is an isometric image of the non negative real numbers $\mathbb{R}_+$. 
 \item Let   $L_1$ and $L_2$ be geodesic rays with parameterizations 
$\psi_1:\mathbb{R}_+\longrightarrow L_1$ and 
$\psi_2:\mathbb{R}_+\longrightarrow L_2$ respectively. The pair of rays $L_1$ 
and $L_2$ are  called equivalent if 
$$\limsup_{t\rightarrow \infty} \| \psi_1(t)-\psi_2(t) \|_T\leq d <\infty$$ for 
some $d.$
\item An element $v$ in $Q^*(S_R)$ is called asymptotically finite if the 
number of equivalence classes of geodesic rays in $Q^*(S_R)$ containing $0$ and 
$v$ is finite.
\end{enumerate}

\end{definition}

Now we characterize rational maps which have asymptotically finite non trivial 
invariant Beltrami differentials.

\begin{theorem}\label{thm. uniq.equiv}
Assume that $S_R$ is connected and let $\mu$ be non trivial an invariant 
Beltrami differential for $R$ supported on $S_R$. Then the functional 
$v_\mu(\phi)=\int \phi\mu$ is asymptotically finite if and only if $R$ is 
Latt\`es.
\end{theorem}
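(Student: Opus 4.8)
The plan is to establish both implications through the machinery of extremality and Hamilton-Krushkal sequences developed earlier. For the forward direction I would show that if $R$ is Latt\`es (in the connected $S_R$ case, a flexible Latt\`es map), then $Q(S_R)$ is finite dimensional, so $Q^*(S_R)$ is a finite dimensional normed space. In a finite dimensional space the Teichm\"uller norm geometry is rigid: the unit sphere is compact, and the number of equivalence classes of geodesic rays joining $0$ to a fixed $v$ is governed by the finitely many ``directions at infinity'', so $v_\mu$ is automatically asymptotically finite. The key observation here is that equivalence classes of geodesic rays in a finite dimensional Banach space correspond to the finitely many extreme structure of the unit ball, hence the count is finite.

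For the converse, which I expect to be the main substance, the strategy is to argue contrapositively: assuming $R$ is not Latt\`es, I would show $v_\mu$ fails to be asymptotically finite by producing infinitely many inequivalent geodesic rays through $0$ and $v_\mu$. The natural tool is the correspondence between geodesic rays terminating at $v_\mu$ and extremal differentials with prescribed Hamilton-Krushkal data. By Theorem \ref{MainTheorem} and the reasoning in Theorem \ref{th.exhaustion}, the existence of a non trivial invariant $\mu$ on $S_R$ when $R$ is not Latt\`es forces $\mu$ to possess degenerated Hamilton-Krushkal sequences (otherwise, by the Earle-Li criterion used in Theorem \ref{th.exhaustion}, one would extract a non zero fixed point $\phi\in Q(S_R)$ under $R^*$, which by the argument in Theorem \ref{MainTechnicalThm} forces the flexible Latt\`es case). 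The presence of degenerated HK-sequences is precisely the phenomenon that spawns infinitely many asymptotic geodesic directions.

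The heart of the argument is then to convert the degeneracy of Hamilton-Krushkal sequences into the infinitude of geodesic ray equivalence classes. I would use that each degenerated HK-sequence $\{\phi_i\}$, being supported with mass escaping to the boundary, can be translated along the dynamics by $R^*$ to produce geometrically distinct unit functionals, and that the invariance of $\mu$ under the Beltrami operator lets one build one-parameter families of extremal maps whose Teichm\"uller geodesic rays stay at infinite mutual distance. Concretely, distinct escape behaviors of the mass of $\phi_i$ under the conservative/dissipative decomposition yield rays $\psi_1,\psi_2$ with $\limsup_t \|\psi_1(t)-\psi_2(t)\|_T=\infty$, defeating the finiteness condition. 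The hard part will be making this construction rigorous: one must verify that the candidate rays are genuine isometric images of $\mathbb{R}_+$ in the Teichm\"uller norm and that inequivalence is not an artifact but reflects a true separation of asymptotic directions; this requires controlling the interplay between the $L_\infty$ extremal norm and the dual Bergman-space geometry along the whole ray, not merely at individual HK-points.

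Throughout I would lean on the Bers Representation Theorem identification of $Q^*(S_R)$ with the Bergman space $B(S_R)$ to make the geodesic ray analysis concrete, and on the dichotomy already proved in Theorem \ref{MainTechnicalThm}: any non zero $R^*$-fixed point in $Q(S_R)$ forces the flexible Latt\`es case. This dichotomy is what ultimately pins down the equivalence, since asymptotic finiteness of $v_\mu$ will be shown equivalent to the absence of degenerated HK-sequences, which is in turn equivalent to the existence of such a fixed $\phi$, and hence to $R$ being Latt\`es.
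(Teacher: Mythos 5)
Your overall architecture matches the paper's: finite dimensionality of $Q^*(S_R)$ handles the Latt\`es direction, and for the converse you reduce asymptotic finiteness to the absence of degenerated Hamilton--Krushkal sequences, then use the Earle--Li extremality criterion to extract a non-zero $R^*$-fixed point in $Q(S_R)$ and invoke the dichotomy from Theorem \ref{MainTechnicalThm}. That last chain is exactly what the paper does. However, the central step --- converting a degenerated HK-sequence into infinitely many inequivalent geodesic rays through $0$ and $v_\mu$ --- is precisely the part you leave as an unproven construction, and your proposed route (translating the HK-sequence by $R^*$ and separating rays by ``distinct escape behaviors of the mass'') is speculative; you yourself flag that verifying the candidate rays are genuine isometric images of $\mathbb{R}_+$ is the hard part. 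The paper does not need any such dynamical construction: it invokes Corollary 6.4 of Earle--Li, which states that an extremal $\mu$ admitting degenerated HK-sequences yields a $\C$-linear isometry $I:\ell_\infty\rightarrow Q^*(S_R)$ sending the constant sequence $m$ to $v_\mu$. Inside $\ell_\infty$ one writes down explicit rays $\psi_{r,i}$ that run along $m$ up to time $r$ and then branch in the direction of the basis vector $e_i$; since $\|e_i-e_j\|_\infty=1$ for $i\neq j$, these give infinitely many pairwise inequivalent rays through $0$ and $m$, and pushing them through $I$ contradicts asymptotic finiteness. Without this (or an equivalent) isometric embedding result your argument does not close.

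Two further gaps. First, the theorem concerns an arbitrary non-trivial invariant $\mu$, but HK-sequences and the Earle--Li machinery apply to extremal differentials; the paper spends the last third of its proof constructing an extremal \emph{invariant} representative $\beta$ with $v_\beta=v_\mu$, by taking a $*$-weak limit of Ces\`aro averages of an extremal (not a priori invariant) representative and checking via duality that the functional is preserved. Your proposal never addresses why one may assume $\mu$ extremal while retaining invariance, which is needed both to apply the HK formalism and to conclude that the resulting $\frac{|\phi|}{\phi}$ identity forces $R^*(\phi)=\phi$. Second, in the Latt\`es direction your claim that finite dimensionality alone bounds the number of equivalence classes of geodesic rays via ``the finitely many extreme structure of the unit ball'' is not a correct general principle for finite dimensional Banach spaces; the paper instead cites the uniqueness of the geodesic through any pair of points in the finite dimensional Teichm\"uller setting (Earle--Li, Gardiner--Lakic), which is the fact actually needed.
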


\begin{proof}
 If $R$ is a Latt\`es map then $Q^*(S_R)$ is finitely dimensional and then  
there is only a unique geodesic ray passing through any 
pair of points in $Q^*(S_R)$ see \cite{EarleLi} and \cite{GardLakic}. 

Reciprocally, suppose that the functional $v_\mu$ is asymptotically 
finite. Let us first assume 
that $\|v_\mu \|_{Q^*(S_R)}=\|\mu\|_{L_\infty}.$ Then by Corollary 6.4 in 
\cite{EarleLi}, if $\mu$ accept degenerated 
Hamilton-Krushkal sequences there exist $\C$-linear isometry 
$I:\ell_\infty\rightarrow Q^*(S_R)$ such that if $m$ is the constant 
sequence 
with value $\|\mu\|_\infty$ then $I(m)=v_\mu.$

Now let $\{e_i\}$ be the canonical basis of $\ell_\infty$. Then as in 
\cite{EarleLi}, we define geodesic rays in $\ell_\infty$ as follows:

For any $r\geq \|\mu\|_\infty$ and 

$$\psi_{r,i}(t)=\bigg \{ \begin{array}{l} 
t\cdot m \textnormal{ for } t\leq r. \\
r\cdot m + (t-r)\|\mu\|_\infty e_i \textnormal{ for } t>r.
\end{array}$$

But for all $i_0, r_1,r_2$,  $$\limsup_{t\rightarrow \infty}
\|\psi_{i_0,r_1}(t)-\psi_{i_0,r_2}(t)\|_{\ell_\infty}\leq 
|r_1-r_2|\|\mu\|_\infty.$$

Also for all $i\neq j$ and all $r$ we have 
\[\limsup_{t\rightarrow \infty} \|\psi_{j,r}(t)-\psi_{i,r}(t)\|_\infty\]
\[=\limsup_{t\rightarrow \infty}\|\mu\|_\infty t \|e_i-e_j\|=\infty.\]

But the existence of the isometry $I$ gives a contradiction. 
Hence $\mu$ does not accept Hamilton-Krushkal degenerated sequences.

Now using similar arguments as in the proof of Theorem \ref{th.exhaustion}, we 
complete the proof in the case where $\mu$ is extremal.

Finally we show that if $\mu$ is a non trivial invariant Beltrami differential, 
then there exist an extremal invariant differential $\nu$ such that 
$v_\nu(\gamma)=v_\mu(\gamma)$ for all $\gamma$ in $Q(S_R).$

Indeed, if $\mu$ is not extremal then by the Banach Extension Theorem  and 
Riesz Representation Theorem there exist is another Beltrami differential 
$\alpha$  which is extremal satisfying  
$\|\alpha\|_\infty=\|\mu\|_T<\|\mu\|_\infty$ and 
such that defines the same functional as $\mu$ in $Q(S_R)$. Let $\beta$ be a 
$*$-weak limit of the Ces\`aro averages $C_n(\alpha)=\frac{1}{n} 
\sum_{i=0}^{n-1} 
\alpha(R^i)\frac{\overline{(R^i)'}}{(R^i)'}$, then 
$\beta(R)\frac{\overline{R'}}{R'}=\beta$ and $\|\beta\|_\infty\leq 
\|\alpha\|_\infty.$  Then we claim that $v_\beta=v_\mu$.
Let $\{C_{n_i}(\alpha)\}$ be a sequence of averages $*$-weakly converging to 
$\beta$. For any $\gamma \in Q(S_R)$ we have $$\int \gamma \beta=\lim \int 
C_{n_i}(\alpha)\gamma$$ by duality the previous limit is 
equal to $$\lim \int \alpha \frac{1}{n_i}\sum_{k=0}^{n_i-1} R^{*k}(\gamma)=\lim 
\int \mu \frac{1}{n_i}\sum_{k=0}^{n_i-1} R^{*k}(\gamma)$$ but $\mu$ is an 
invariant differential and again using duality the previous limit becomes 
$$\lim \int \mu \gamma=\int \mu \gamma.$$

Hence for any $\gamma$ in $Q(S_R)$ we have $$\int \beta \gamma= \lim 
\int C_{n_i}(\alpha)\gamma=\int \mu \gamma.$$
Since $\alpha$ is extremal we have 
$\| 
\beta \|_\infty=\| \alpha \|_\infty=\|\mu\|_T$. Thus 
$\beta$ is the desired  extremal invariant differential.  

\end{proof}

To conclude, let us note that the arguments of the theorems in this paper 
work for entire and meromorphic functions in the class
of Eremenko-Lyubich. This is the class of all entire or meromorphic functions 
with finitely many critical and singular values. It is not completely clear 
whether this arguments can be carried on entire or meromorphic functions whose 
asymptotic value set contains a compact set of positive Lebesgue measure.

 \bibliographystyle{amsplain} 

\bibliography{workbib}

\end{document}